\documentclass[a4paper,fleqn]{article}

\usepackage{authblk}

\usepackage{amsthm,amssymb,amsbsy,amsmath,amsfonts,amssymb,amscd,mathrsfs}
\usepackage{bbold}
\usepackage{enumitem}
\usepackage{graphicx}
\usepackage{color}

\newtheorem{prop}{Proposition}[section]
\newtheorem{coro}{Corollary}[section]
\newtheorem{lem}{Lemma}[section]
\newtheorem{rem}{Remark}[section]
\newtheorem{assum}{Assumption}[]
\newtheorem{assums}[assum]{Assumptions}
\newtheorem{hypos}[assum]{Hypotheses}
\newtheorem{definition}{Definition}
\newtheorem{example}{Example}

\newcommand{\Rset}{\mathbb{R}}

\newcommand{\ds}{\displaystyle}

\DeclareMathOperator{\proj}{proj}

\title{Optimal synthesis for a class of $L^\infty$ optimal control problems in the plane with $L^1$ constraint on the input}
\author[1]{Emilio Molina}
\author[2]{Alain Rapaport}
\affil[1]{GIPSA-lab, Univ. Grenoble Alpes, CNRS, Grenoble INP, Grenoble,  France}
\affil[ ]{\texttt{emilio.molina-olivares@gipsa-lab.fr}}
\affil[2]{UMR MISTEA, Univ. Montpellier, INRAE, Institut Agro, Montpellier, France}
\affil[ ]{\texttt{alain.rapaport@inrae.fr}}
\date{\today}

\begin{document}
	
	\maketitle
	
	\begin{abstract}
		For a particular class of planar dynamics that are linear with respect to the control variable, we show that the feedback strategy "null-singular-null" is minimizing the maximum of a coordinate over infinite horizon, under a $L^1$ budget constraint on the control. Moreover, we characterize the optimal cost as a function of the budget. The proof is based on an unusual use of the clock form. This result generalizes the one obtained formerly for the SIR epidemiological model to more general Kolmogorov dynamics, that we illustrate on other biological models.\\
		
		\noindent {\bf Key-words.} Optimal control, maximum cost, infinite horizon, feedback strategy, Green's Theorem, singular arc.\\
		
		\noindent  {\bf Mathematics Subject Classification (2000).} 49J35, 49K10, 49K35, 49N35, 26B20.
		
	\end{abstract}

	\section{Introduction}
	
	The synthesis of optimal solutions of control problems with maximum cost has received relatively few attention in the literature, apart characterizations of the value function in terms of Hamilton-Jacobi-Bellman variational inequality \cite{BarronIshii,GonzalezAragone}. For concrete problems, it is often very difficult or merely impossible to find analytical solutions, but these characterizations has led to dedicated numerical schemes \cite{DiMarcoGonzalez1999,GianattiAragoneLolitoParente}. On the other hand, necessary optimality conditions cannot be provided by a direct application of the Pontryagin's Maximum Principle, because the maximum cost is not a criterion in Mayer or Bolza form. However, equivalent formulations in Mayer form can be obtained by augmenting the state dynamics and adding a state constraint \cite{JOTA}. In practice, dealing with the state constraint remains an issue to derive analytical optimal strategies. Recently, the optimal control problem of minimizing the peak of an epidemic has been investigated for the well-known epidemiological SIR model \cite{KermackMcKendrick}. The authors have provided the optimal solution when controlling the transmission rate under a budget or $L^1$ constraint on the control \cite{Automatica}. This problem, whose trajectories lie in the plane, presents a singular arc and has been solved analytically by applying a clock form. The clock form technique is well-known for planar optimal control problem with integral or minimal time criterion \cite{Miele,HermesLaSalle}, as a tool to compare a candidate optimal solution with any other admissible solutions. Therefore, it cannot be applied in this way for  comparing maximum costs. However, for this particular epidemiological problem, the authors have used the clock form in reasoning by the absurd, showing that a possible better solution would require a larger budget. While the proof has been derived for the particular dynamics of the SIR model, the aim of the present work is to extend this technique to more generic problems of minimizing the peak of one coordinate of planar dynamics under a  $L^1$ constraint on the control. We characterize a class of problems for which the optimal solution presents the same structure of the control strategy, which consists in three phases: 1. go as fast as possible to the optimal peak value 2. apply a control to maintain the peak value constant until the budget is exhausted (singular arc),  and 3. release the control when entering a domain of the state space for which the peak cannot increasing applying the null control. Moreover, we give conditions on particular Kolmogorov dynamics in plane, for which this result generalizes the one obtained previously for the SIR epidemiological model, which can be then obtained as a simple application of our result.\\

	\medskip
	
	More precisely, we consider planar controlled dynamics defined on an positively invariant domain ${\cal D}$ of $\Rset^2$
	\begin{equation}
		\label{sys}
		\left\{\begin{array}{lll}
			\dot x & = & f_1(x,y) + g_1(x,y)u\\
			\dot y & = & f_2(x,y) + g_2(x,y) u
		\end{array}
		\right.  
	\end{equation}
	where the maps $f_1$, $f_2$, $g_1$, $g_2$ are assumed to be smooth (at least $C^1$). Given a positive number $K$ and an initial condition $(x_0,y_0)\in \mathcal{D}$, we consider the optimal control problem over infinite horizon
	\begin{equation}\label{P}
		\inf_{u(\cdot) \in {\cal U}}\sup_{t \geq 0} y(t),
	\end{equation}
	where ${\cal U}$ denotes the set of measurable functions $u(\cdot)$ that takes values in $U:=[0,1]$
	subject to the $L^1$ constraint
	\begin{equation}
		\label{constraint}
		\int_0^{+\infty} u(t)dt \leq K,
	\end{equation}
	The problem consists then in minimizing the "peak" of the variable $y$ under a  "budget" constraint on the control $u(\cdot)$. We shall say that a solution of \eqref{sys} is {\em admissible} if the control $u(\cdot)$ satisfies the constraint \eqref{constraint}.
	
	\medskip
	
		The paper is structured as follows. In the next section, we give assumptions and some preliminary results, which ensure the well-posedness of problem \eqref{P} under infinite horizon and the control strategy that we consider later. Section \ref{secNSN} defines the control strategy that what we propose to name "NSN", and gives a characterization of it. Section \ref{secmain} proves our main result about the optimality of the NSN strategy, under a flux-like condition. Finally, we consider in Section \ref{secparticular} a class of controlled Kolmogorov dynamics for which the former assumptions are satisfied. As examples, we show how our result applies straightforwardly to the SIR model and to more sophisticated biological models. 
		
	
	\section{Assumptions and preliminaries}
	\label{secass}
	 
We  first make the following assumptions that will be used to deal with infinite horizon, where $\proj_y$ denotes the second projection in the $(x,y)$ coordinates, 
\begin{assums} 
	\label{ass1}
	One has
	 	\begin{enumerate}[label=\roman*.]
	 		
	 	\item For any initial condition in ${\cal D}$, the solutions of the uncontrolled dynamics (that is with $u=0$) is bounded, and any other admissible solution with a lower cost $\sup_t y(t)$ are also bounded.

	 	\item The strict sub and super level sets of the function $f_2$
	 	\[
	 	{\cal D}_- := \{ (x,y) \in {\cal D} \; ; \; f_2(x,y) < 0 \},
	 	\quad
	 	{\cal D}_+ := \{ (x,y) \in {\cal D} \; ; \; f_2(x,y) > 0 \}
	 	\]
	 	are non empty.
	 	\item For any $y \in \proj_y({\cal D}_+)$, there exists a unique  $x$ such that $f_2(x,y)=0$ with $(x,y)$ in ${\cal D}$.
	 	\end{enumerate}
 	
\end{assums}

\medskip

Note that the optimal control problem \eqref{P} over infinite horizon  is well posed (i.e.~is finite) under Assumption \ref{ass1}.i.
Under these assumptions, we define the level set
\[
 {\cal D}_0 := \{ (x,y) \in {\cal D} \; ; \; f_2(x,y) = 0 \}
\]
and the function
\begin{equation}
	\label{def_xh}
x_h(y):=\left\{ x  \; ; \; (x,y) \in {\cal D}_0 \right\}, \quad y \in \proj_y({\cal D}_+)
\end{equation}
We shall also require a certain behavior of the vector fields $f$ and $g$ on the sets ${\cal D}_+$ and ${\cal D}_0$.

\begin{assums} 
	\label{ass2}
	In ${\cal D}_+$, one has the properties
		\begin{enumerate}[label=\roman*.]
		\item $f_1$ is negative and decreasing w.r.t.~$x$ and $y$
		\item $g_1$ is increasing w.r.t.~$x$ and $y$ and $f_1+g_1$ is non positive
		\item $f_2$ is increasing w.r.t.~$x$
		\item $g_2$ is decreasing w.r.t.~$x$ and $y$ and $f_2+g_2$ is negative
        \end{enumerate}
    and moreover at ${\cal D}_0$
    \begin{enumerate}[label=\roman*.]
    	\addtocounter{enumi}{4}
    	\item[v.] $f_1$ is negative
    	\item[vi.] $f_2$ is increasing w.r.t.~$x$ and non increasing w.r.t.~$y$
    \end{enumerate}
    
\end{assums}

\medskip

Then, the following Lemma gives properties of the trajectories in ${\cal D}_+$ and its complementary in ${\cal D}$, related to the infinite horizon.

\begin{lem} 
	\label{lemD+}
	Under Assumptions \ref{ass1} and \ref{ass2},
	\begin{enumerate}
		\item With the control $u=0$, the domain ${\cal D}\setminus {\cal D}_+$ is positively invariant, and from any initial condition $(x_0,y_0)$ in ${\cal D}_+$, the solution of \eqref{sys} reaches ${\cal D}_0$ in finite time.
		\item For any initial condition $(x_0,y_0)$ in ${\cal D}_+$ and optimal control $u(\cdot)$ which satisfies the constraint \eqref{constraint}, the solution of \eqref{sys} reaches (possibly in infinite time) the domain ${\cal D}_0$.
	\end{enumerate}
\end{lem}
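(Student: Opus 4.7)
\medskip

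\noindent\textbf{Proof plan.} For Part~1, I would first analyze the evolution of $f_2$ along trajectories under $u = 0$. The chain rule gives
\[
\frac{d}{dt} f_2(x(t),y(t)) = \partial_x f_2 \cdot f_1 + \partial_y f_2 \cdot f_2,
\]
which reduces on $\mathcal{D}_0$ to $\partial_x f_2 \cdot f_1$. By Assumptions~\ref{ass2}.v ($f_1 < 0$) and~\ref{ass2}.vi ($\partial_x f_2 \geq 0$), this quantity is non-positive, so along $\mathcal{D}_0$ the vector field points into $\mathcal{D} \setminus \mathcal{D}_+$. Combined with the standing positive invariance of $\mathcal{D}$ itself, this establishes invariance of $\mathcal{D} \setminus \mathcal{D}_+$ under $u = 0$.

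For the finite-time reach from $\mathcal{D}_+$ under $u = 0$, I would use that $\dot x = f_1 < 0$ and $\dot y = f_2 > 0$ on $\mathcal{D}_+$ (Assumption~\ref{ass2}.i), so $x$ decreases and $y$ increases monotonically. Boundedness (Assumption~\ref{ass1}.i) forces convergence $x(t) \to x^*$, $y(t) \to y^*$. Were the trajectory to remain in $\mathcal{D}_+$ forever, the limit $(x^*, y^*)$ would lie in $\overline{\mathcal{D}_+} \subset \mathcal{D}_+ \cup \mathcal{D}_0$, where Assumptions~\ref{ass2}.i and~\ref{ass2}.v still yield $f_1(x^*, y^*) < 0$. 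Continuity then gives $\dot x \leq -\epsilon$ for $t$ large, contradicting $x(t) \to x^*$. Hence the trajectory exits $\mathcal{D}_+$ in finite time, and by positive invariance of $\mathcal{D}$ it must cross $\mathcal{D}_0$.

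For Part~2, the trajectory under the optimal admissible $u(\cdot)$ is bounded by Assumption~\ref{ass1}.i. Either it exits $\mathcal{D}_+$ in finite time (and then crosses $\mathcal{D}_0$ by continuity), or it stays in $\mathcal{D}_+$ for all $t \geq 0$. In the latter case I would argue by contradiction: if the trajectory were contained in a compact $\mathcal{K} \subset \mathcal{D}_+$ with $c := \min_\mathcal{K} f_2 > 0$ and $M := \max_\mathcal{K} |g_2|$, then
\[
\dot y = f_2 + g_2 u \geq c - M u,
\]
which integrates to $y(t) \geq y_0 + c t - M K$, contradicting boundedness as $t \to \infty$. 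Hence $\liminf_t d((x(t), y(t)), \mathcal{D}_0) = 0$, matching the ``possibly in infinite time'' conclusion.

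The subtlest step I anticipate is Part~1's invariance argument: if the $x$-monotonicity of $f_2$ in Assumption~\ref{ass2}.vi is only weak, the computation above gives only $\frac{d}{dt}f_2 \leq 0$ on $\mathcal{D}_0$, and one may need a higher-order derivative or a local geometric argument on the level set $\mathcal{D}_0 = \{f_2 = 0\}$ to exclude a tangential re-entry into $\mathcal{D}_+$. All other steps are direct consequences of monotonicity, boundedness, and the $L^1$ budget constraint.
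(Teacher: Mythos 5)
Your overall skeleton matches the paper's: the invariance of ${\cal D}\setminus{\cal D}_+$ via the sign of $\tfrac{d}{dt}f_2=\partial_x f_2\,f_1$ on ${\cal D}_0$ is essentially the paper's argument verbatim (the paper gets the strict sign on ${\cal D}_+$ from Assumptions \ref{ass2}.i and \ref{ass2}.iii and passes to $\leq 0$ on ${\cal D}_0$ by continuity), and Part 2 rests, as in the paper, on integrating $\dot y = f_2+g_2u$ against the budget $K$. The two places where you diverge both hinge on the same weak point: you evaluate the hypotheses at accumulation points of the trajectory, whereas the paper arranges uniform bounds at fixed points of ${\cal D}$. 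For the finite-time escape in Part 1, you pass to the monotone limit $(x^*,y^*)$ and claim $\overline{{\cal D}_+}\subset{\cal D}_+\cup{\cal D}_0$; this inclusion can fail, since the closure of ${\cal D}_+$ may meet $\partial{\cal D}$, where none of the sign hypotheses on $f_1$ are assumed (e.g.\ for the Kolmogorov class $f_1=-\phi_1 x$ vanishes at $x=0$). The paper avoids this entirely: using that $y(t)\geq y_0$, that $f_2$ increasing in $x$ forces $x(t)>x_h(y(t))\geq x_h(y_0)$ (monotonicity of $x_h$ from Assumption \ref{ass2}.vi), and that $f_1$ is decreasing in both arguments, it gets the \emph{uniform} bound $\dot x\leq f_1(x_h(y_0),y_0)<0$ along the whole arc in ${\cal D}_+$, so $x(\cdot)$ would be unbounded — no limit point is ever evaluated. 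You should patch your argument the same way, or at least justify why the limit stays in ${\cal D}$.

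In Part 2 your compactness dichotomy is more elementary than the paper's explicit minorant $\dot y\geq f_2+g_2(x_0,\bar y)u$ (which uses $f_1+g_1\leq 0$ to get $x(\cdot)$ nonincreasing and the monotonicity of $g_2$), but it has the same boundary defect (the non-compact branch only gives accumulation on $\partial{\cal D}_+$, which need not be ${\cal D}_0$) and it delivers only $\liminf_t d((x(t),y(t)),{\cal D}_0)=0$. The paper instead concludes $\int_0^\infty f_2(x(t),y(t))\,dt<\infty$ and hence $f_2(x(t),y(t))\to 0$, i.e.\ genuine convergence to ${\cal D}_0$; this stronger statement is what the proof of Proposition \ref{mainprop} actually consumes, since it needs a well-defined endpoint $(x(t^0),y(t^0))\in{\cal D}_0$ of the curve ${\cal C}$ even when $t^0=+\infty$. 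So your Part 2, as written, proves a statement slightly too weak for its downstream use.
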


\begin{proof}
	1. At $(x,y) \in {\cal D}_0$, one has for $u=0$
	\[
	\frac{d}{dt} f_2(x,y)= \frac{\partial f_2(x,y)}{\partial x}f_1(x,y)
	\]
	where $\frac{\partial f_2}{\partial x}f_1$ is a negative function on ${\cal D}_+$ from Assumptions \ref{ass2}.i and \ref{ass2}.iii. By continuity, one has  $\frac{\partial f_2}{\partial x}f_1\leq 0$ on ${\cal D}_0$ and we deduce the that the sub level set $\{ (x,y) \in {\cal D} \; ; \; f_2(x,y) \leq 0\}={\cal D}\setminus {\cal D}_+$ is positively invariant.
	
	With the control $u$, as long as the solution $(x(t),y(t))$ remains in ${\cal D}_+$, one has $\dot y=f_2(x,y)>0$. Therefore, one has $y(t)\geq y_0$. Moreover, as $f_2$ is increasing w.r.t.~$x$ on ${\cal D}_+ \cup {\cal D}_0$ (Assumptions \ref{ass2}.iii and \ref{ass2}.vi), one has
	\[
	f_2(x(t),y(t))>0=f_2(x_h(t),y(t)) \Rightarrow x(t)>x_h(y(t))
	\]
	Then, $f_1$ being decreasing w.r.t.~$x$ and $y$ on ${\cal D}_+$ (Assumption \ref{ass2}.i), one has also
	\[
	\dot x(t)=f_1(x(t),y(t)) \leq f_1(x_h(y(t)),y_0)
	\]
	On the other hand, one has on ${\cal D}_0$
	\[
	f_2(x_h(y),y)=0 \Rightarrow x_h^\prime(y)=-\frac{\partial_y f_2(x,y)}{\partial_x f_2(x,y)}\geq 0
	\]
	from Assumption \ref{ass2}.vi. The map $x_h$ is thus not decreasing and one has then $x_h(y(t))\geq x_h(y_0)$, from which one gets
	\[
	\dot x_1(t) \leq f_1(x_h(y_0),y_0)
	\]
	as long as $(x(t),y(t))$ remains in ${\cal D}_+$. From Assumption \ref{ass2}.v, one has also $f_1(x_h(y_0),y_0) < 0$. If  $(x(t),y(t))$ belongs to ${\cal D}_+$ for any $t\geq 0$, the solution $x(\cdot)$ is unbounded, which contradicts Assumption \ref{ass1}.1. We deduce that $(x(t),y(t))$ has to escape ${\cal D}_+$ in finite time, and consequently $y(\cdot)$ reaches its maximum $y_{max}^0$ at finite time.
	
	\medskip
	
	2. Let $(x(\cdot),y(\cdot))$ be an optimal solution, and $\bar y := \sup_t y(t) \leq y_{max}^0 < +\infty$.
	Note that $x(\cdot)$ is not increasing in the domain ${\cal D}_+$,  with Assumption \ref{ass2}.ii. Then, with Assumption \ref{ass2}.iv, one has
	\[
	\dot y(t) \geq f_2(x(t),y(t))+ g_2(x_0,\bar y)u(t)
	\]
	as long as the solution $(x(t),y(t))$ remains in ${\cal D}_+$. Therefore, one has
	\[
	\bar y \geq y(t) \geq y_0 + \int_0^t f_2(x(t),y(t))dt+g_2(x_0,\bar y)\int_0^t u(t)dt
	\]
	and as $g_2 < -f_2<0$ is negative, one obtains
	\[
	\int_0^t f_2(x(t),y(t))dt < \bar y - y_0 - g_2(x_0,\bar y)K < +\infty
	\]
	If the solution $(x(t),y(t))$ remains in ${\cal D}_+$ for any $t \geq 0$, then $f_2(x(t),y(t))$ tends to $0$ when $t$ tends to $+_\infty$, the function $f_2$ being positive in ${\cal D_+}$. We conclude that any solution reaches the domain ${\cal D}_0$ possibly in infinite  time.
	
\end{proof}
	 	
\section{The NSN strategy}
\label{secNSN}

Let us fix an initial condition $(x_0,y_0)$ in ${\cal D}_+$ and consider the uncontrolled dynamics, i.e.~with $u=0$.
As long as its solution, denoted $(x^0(\cdot),y^0(\cdot))$,
 belongs to ${\cal D}_+$, $y^0(\cdot)$ is increasing. From Lemma \ref{lemD+}, we know that $y^0(\cdot)$ reaches in finite time the  domain ${\cal D}_-$, where it is decreasing, and finally remains in ${\cal D}_-$. Therefore $y^0(\cdot)$ reaches  its maximum $y^0_{max}<+\infty$ in finite time, and for any $\bar y \in [y_0,y^0_{max}]$, we can define
\begin{equation}
	\label{def_xbar}
\bar x(\bar y) :=x^0(\bar t_{\bar y})\mbox{ where } \bar t_{\bar y}:=\inf\{ t \geq 0; \;y^0(t) =\bar y\} < +\infty
\end{equation}
Note that $x^0(\cdot)$ is decreasing by Assumption \ref{ass2}.i, and therefore the map $\bar y \mapsto \bar x(\bar y)$ is smooth with $\bar x^\prime <0$.

\medskip

We define the NSN (for "Null-Singular-Null") strategy as follows:

\begin{definition}
	For $\bar y \in [y_0,y^0_{max}]$, consider the feedback control
	\begin{equation}
		\label{feedback}
		\psi_{\bar y}(x,y):=\begin{cases}
			k(x):=-\dfrac{f_2(x,\bar y)}{g_2(x,\bar y)} , & \mbox{if } y=\bar y \mbox{ and  } (x,	\bar y)\in {\cal D_+} ,\\
			0 , & \mbox{otherwise.}
		\end{cases}
	\end{equation}
	We denote the $L^1$ norm associated to the NSN control
	\[
	{\mathcal L}(\bar y):= \int_0^{+\infty} u^{\psi_{\bar y}}(t)dt, \quad \bar y \in [y_0,y^0_{max}] ,
	\]
	where $u^{\psi_{\bar y}}(\cdot)$ is the open-loop control generated by the feedback \eqref{feedback}.
\end{definition}

Note that the function $k$ is well defined and takes values in $[0,1]$ by Assumption \ref{ass2}.iv.

\medskip

Let us now define the function
\[
\Delta(x,y):=f_2(x,y)g_1(x,y)-f_1(x,y)g_2(x,y), \quad (x,y) \in {\cal D}
\]
Note that $\Delta$ is negative on ${\cal D}_+$ thanks to Assumption \ref{ass2}.
We consider the following assumption
\begin{assum}
	\label{ass3}
 Under Assumption \ref{ass2}, 
	the function $\frac{f_2}{\Delta}$ is increasing w.r.t.~$y$ in ${\cal D}_+$
\end{assum}

Then, the function ${\cal L}$ can be characterized as follows.

\begin{prop} 
	\label{propcalC}
	Under Assumptions \ref{ass1}, \ref{ass2} and \ref{ass3}, one has
	\begin{equation}
		\label{calL}
		{\cal L}(\bar y)=\int_{x_h(\bar y)}^{\bar x(\bar y)}\dfrac{-f_2(x,\bar y)}{\Delta(x,\bar y)}dx, \quad \bar y \in [y_0,y^0_{max}]
	\end{equation}
(where $x_h(\bar y)$ and $\bar x(\bar y)$ are defined in \eqref{def_xh} and \eqref{def_xbar} respectively).
	Moreover, the map $\bar y \mapsto {\cal L}(\bar y)$ is decreasing.
\end{prop}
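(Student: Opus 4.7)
The plan is to split the NSN trajectory into its three phases and compute the contribution of each to the $L^1$ norm, then differentiate the resulting integral.

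First I would analyze the singular phase. Starting from $(x_0,y_0)$, the null control drives the state along $(x^0(\cdot),y^0(\cdot))$ until $y=\bar y$ is reached at $\bar t_{\bar y}$, at which point $x=\bar x(\bar y)$; this phase contributes nothing to ${\cal L}(\bar y)$. On the line $\{y=\bar y\}\cap{\cal D}_+$ the feedback $u=k(x)=-f_2(x,\bar y)/g_2(x,\bar y)$ enforces $\dot y=0$, so $y$ stays equal to $\bar y$ and
\[
\dot x=f_1(x,\bar y)+g_1(x,\bar y)k(x)=\frac{f_1g_2-g_1f_2}{g_2}(x,\bar y)=-\frac{\Delta(x,\bar y)}{g_2(x,\bar y)}.
\]
Since $\Delta<0$ and $g_2<0$ on ${\cal D}_+$ (by Assumption \ref{ass2}), $\dot x<0$, so $x$ strictly decreases from $\bar x(\bar y)$ until it meets $x_h(\bar y)$ in finite time. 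Beyond this point, $(x,y)$ leaves ${\cal D}_+$, the feedback switches to $0$, and Lemma \ref{lemD+}.1 ensures the state stays in ${\cal D}\setminus{\cal D}_+$ forever, so the null phase after singular contributes nothing either. Changing variable $t\mapsto x$ on the singular arc via $dt=-g_2(x,\bar y)/\Delta(x,\bar y)\,dx$ gives
\[
{\cal L}(\bar y)=\int_{\bar t_{\bar y}}^{t_{\mathrm{end}}}k(x(t))\,dt=\int_{\bar x(\bar y)}^{x_h(\bar y)}\left(-\frac{f_2}{g_2}\right)\frac{-g_2}{\Delta}\,dx=\int_{x_h(\bar y)}^{\bar x(\bar y)}\frac{-f_2(x,\bar y)}{\Delta(x,\bar y)}\,dx,
\]
which is \eqref{calL}.

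Second, I would show ${\cal L}$ is decreasing by direct differentiation. Writing $h(x,y):=-f_2(x,y)/\Delta(x,y)$, Leibniz's rule gives
\[
{\cal L}'(\bar y)=h(\bar x(\bar y),\bar y)\,\bar x'(\bar y)-h(x_h(\bar y),\bar y)\,x_h'(\bar y)+\int_{x_h(\bar y)}^{\bar x(\bar y)}\partial_y h(x,\bar y)\,dx.
\]
The second boundary term vanishes because $f_2(x_h(\bar y),\bar y)=0$. For the first boundary term, $h>0$ on ${\cal D}_+$ (as $f_2>0$, $\Delta<0$) and $\bar x'(\bar y)<0$ by Assumption \ref{ass2}.i, so the term is negative. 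For the integral term, Assumption \ref{ass3} states that $f_2/\Delta$ is increasing in $y$ on ${\cal D}_+$, equivalently $\partial_y h<0$, and the integral is over an interval of positive length (since $\bar x(\bar y)>x_h(\bar y)$ on the singular arc), so the integral term is also strictly negative. Therefore ${\cal L}'(\bar y)<0$.

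The main technical point to be careful with is justifying that the singular arc really terminates at $x_h(\bar y)$ with $x$ monotonically decreasing all the way (so the change of variable is a genuine diffeomorphism onto $[x_h(\bar y),\bar x(\bar y)]$) and that $k$ stays in $[0,1]$ throughout; both follow cleanly from Assumption \ref{ass2} (in particular $f_2+g_2<0$ and $\Delta<0$ on ${\cal D}_+$). The invariance of ${\cal D}\setminus{\cal D}_+$ under $u=0$ from Lemma \ref{lemD+}.1 is what guarantees that no further control is needed after the singular phase, so the expression for ${\cal L}$ is indeed exhausted by the singular contribution.
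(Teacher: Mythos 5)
Your proof is correct and follows essentially the same route as the paper: only the singular phase contributes to the $L^1$ norm, the change of variable $t\mapsto x$ along $\dot x=-\Delta/g_2<0$ yields \eqref{calL}, and Leibniz differentiation with the vanishing boundary term at $x_h(\bar y)$ gives ${\cal L}'<0$ from Assumptions \ref{ass2} and \ref{ass3}. The only minor discrepancies are that you assert the singular arc reaches $x_h(\bar y)$ in finite time whereas the paper allows this to take infinite time (which suffices for the change of variable), and that the paper avoids assuming differentiability of $x_h$ by noting the integrand vanishes there; incidentally your sign $-f_2/\Delta>0$ is the correct one where the paper's text contains a typo.
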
 

\begin{proof}
	Along the solution $(x(\cdot),y(\cdot))$ generated by the feedback control \eqref{feedback}, let us consider the time function
	\[
	\gamma_2(t):=f_2(x(t),\bar y), \quad t \geq \bar t_{\bar y}
	\]
	where $\bar t_{\bar y}=\inf\{ t \geq 0; \; y(t)=\bar y\}<+\infty$. As long as $(x(t),\bar y) \in {\cal D}_+$, one has
	\[
	\dot x = h(x):=f_1(x,\bar y)+g_1(x,\bar y)k(x)=-\frac{\Delta(x,\bar y)}{g_2(x,\bar y)} 
	\]
	which is negative by Assumption \ref{ass2}, and then
	\[
	\dot\gamma_2 = \frac{\partial f_2(x(t),\bar y)}{\partial x}h(x) < 0
	\]
	(the function $f_2$ being increasing w.r.t.~$x$ in ${\cal D}_+$ from Assumption \ref{ass2}.iii).
	The function $\gamma_2$ is thus decreasing and therefore ${\cal D}_0$ is reached at a time $T \geq \bar t_{\bar y}$ (possibly equal to $+\infty$). One has then 
	\[
	x(T)= \max\{ x \leq \bar x(\bar y); \; f_2(x,\bar y)= 0\} =x_h(\bar y)
	\]
	If $T< +\infty$, then $u(t)=0$ for any $t>T$ because the state cannot reaches again ${\cal D}_+$ by Lemma \ref{lemD+}. Therefore, one has
	\[
	{\mathcal L}(\bar y)=\int_{\bar t_{\bar y}}^T k(x(t))dt
	\]
	Note that the maps $[\bar t_{\bar y},T] \mapsto [x_h(\bar y),\bar x(\bar y)]$ is onto, and one can then write
	\[
	{\mathcal L}(\bar y)= -\int_{x_h(\bar y)}^{\bar x(\bar y)} \frac{k(x)}{h(x)}dx=\int_{x_h(\bar y)}^{\bar x(\bar y)}\dfrac{-f_2(x,\bar y)}{\Delta(x,\bar y)}dx
	\]
	The map $\bar y \mapsto x_h(\bar y)$ is not necessarily differentiable. However, the integrand in the above expression of ${\cal L}$ is null at $x_h(\bar y)$ for any $\bar y$. Therefore, ${\cal L}$ is differentiable with
	\[
	{\cal L}^\prime (\bar y) = \left(-\dfrac{f_2(\bar x,\bar y)}{\Delta(\bar x,\bar y)}\right)\bar x^\prime
	-\int_{x_h}^{\bar x}\dfrac{\partial}{\partial y}\left(\dfrac{f_2(x,\bar y)}{\Delta(x,\bar y)}\right)dx
	\]
	By Assumptions \ref{ass2} and \ref{ass3}, one has $-\frac{f_2}{\Delta}<0$ and $\frac{\partial}{\partial y}(\frac{f_2}{\Delta})>0$ on ${\cal D}_+$, and as $\bar x^\prime$ is negative, we deduce that one has ${\cal L}^\prime (\bar y) <0$.
\end{proof}

\begin{rem}
\label{rem_feedback}
    When applying the feedback \eqref{feedback}, it generates only one discontinuity point of the open loop control $u^{\psi_{\bar y}}(\cdot)$, when the solution $y(\cdot)$ reaches $\bar y$ in ${\cal D}_+$, but not when $y(\cdot)$ leaves the singular arc $y=\bar y$ as the control is null when reaching ${\cal D}_0$. Consequently, the trajectory tangentially leaves the singular arc.
\end{rem}


 \section{An optimal synthesis}
 \label{secmain}
 
 In this section, we give our main result about the optimality of the NSN strategy, which is expressed in terms of positivity of a certain flux on the domain ${\cal D}_+$. The proof is using the clock form but in an unusual way (compared for instance to \cite{Miele,HermesLaSalle}), which requires some assumption about the dynamics on the boundary of the domain ${\cal D}_+$, given below.
 \begin{assum}
 	\label{ass4}
 	In ${\cal D}_0$, $g_2$ is negative and one has $\nabla f_2.(f+g) \geq 0$.
 \end{assum}

\begin{prop}
		\label{mainprop}
Under Assumptions \ref{ass1}, \ref{ass2}, \ref{ass3} and \ref{ass4}, let  $(x_0,y_0)$ be an initial condition in  ${\cal D}_+$ such that $\mathcal{L}(y_0)\geq K$.
If one has
	   \begin{equation}
	   	\label{condGreen}
		\frac{\partial}{\partial y}\left(\frac{f_2(x,y)}{\Delta(x,y)}\right) +
		\frac{\partial}{\partial x}\left(\frac{f_1(x,y)}{\Delta(x,y)}\right) >0, \quad (x,y) \in {\cal D}_+, \; y \leq y^0_{max}
		\end{equation}
then the feedback $\psi_{\bar y^*}$, with $\bar y^*\in[y_0,y^0_{max}]$ such that $\mathcal{L}(\bar y^*)=K$, is optimal.
	\end{prop}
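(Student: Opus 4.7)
The strategy is a proof by contradiction via the clock form, but applied to measure ``budget used'' rather than time elapsed. Suppose an admissible trajectory $\tilde\gamma = (\tilde x, \tilde y)$ achieves $\tilde y_{\max} := \sup_{t\geq 0} \tilde y(t) < \bar y^*$. Since $\mathcal{L}$ is strictly decreasing on $[y_0, y^0_{\max}]$ by Proposition~\ref{propcalC}, one has $\mathcal{L}(\tilde y_{\max}) > \mathcal{L}(\bar y^*) = K$. (Existence of $\bar y^*$ itself follows from $\mathcal{L}(y_0) \geq K$, $\mathcal{L}(y^0_{\max}) = 0$, continuity of $\mathcal{L}$, and its strict monotonicity.) It therefore suffices to prove the dual statement: any admissible trajectory with peak $\leq \bar y$ consumes at least $\mathcal{L}(\bar y)$ of budget. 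Applied with $\bar y = \tilde y_{\max}$, this would give $\int_0^{+\infty} \tilde u\,dt \geq \mathcal{L}(\tilde y_{\max}) > K$, contradicting admissibility of $\tilde\gamma$.

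The tool is the control one-form
$$ \omega := \frac{f_2(x,y)\, dx - f_1(x,y)\, dy}{\Delta(x,y)}. $$
A direct algebraic computation (solving the $2 \times 2$ system $dx = (f_1 + g_1 u)\,dt$, $dy = (f_2 + g_2 u)\,dt$ for $u\, dt$) shows that along any solution of~\eqref{sys} one has $\omega = u\, dt$. Hence $\int_\gamma \omega$ is exactly the $L^1$ budget consumed along $\gamma$, and Proposition~\ref{propcalC} re-reads as $\mathcal{L}(\bar y) = \int_{\gamma^N_{\bar y}}\omega$, where $\gamma^N_{\bar y}$ is the NSN trajectory from $(x_0, y_0)$ with target peak $\bar y$.

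Fix $\bar y := \tilde y_{\max}$ and close $\tilde\gamma$ and $\gamma^N_{\bar y}$ into a positively oriented planar loop by joining their terminal points (both in $\mathcal{D}_0$ by Lemma~\ref{lemD+}) through an arc $C \subset \mathcal{D}_0$; a time-truncation and limit $T \to +\infty$ handles the case where $\tilde\gamma$ reaches $\mathcal{D}_0$ only asymptotically. Let $\Omega \subset \mathcal{D}_+$ be the enclosed region. Green's theorem gives
$$ \int_{\tilde\gamma}\omega \;-\; \int_{\gamma^N_{\bar y}}\omega \;+\; \int_C \omega \;=\; \pm \iint_\Omega d\omega, $$
while a direct differentiation yields
$$ d\omega \;=\; -\bigl(\partial_x(f_1/\Delta) + \partial_y(f_2/\Delta)\bigr)\, dx \wedge dy, $$
which by the flux hypothesis~\eqref{condGreen} has a definite sign on $\mathcal{D}_+ \cap \{y \leq y^0_{\max}\}$. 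On the closing arc $C \subset \mathcal{D}_0$ one uses $f_2 = 0$ and $\Delta = -f_1 g_2$ to get $\omega|_C = dy/g_2$; Assumption~\ref{ass4} ($g_2 < 0$ on $\mathcal{D}_0$, and non-re-entry from $\mathcal{D}_-$ to $\mathcal{D}_+$) pins the sign of $\int_C \omega$ in the direction that preserves the inequality. Collecting all signs produces $\int_{\tilde\gamma}\omega > \mathcal{L}(\bar y)$, the contradiction sought.

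The main obstacle will be the bookkeeping of the loop $\partial\Omega$. The competing trajectory $\tilde\gamma$ need not be monotone in either coordinate, may touch $\mathcal{D}_0$ tangentially, or reach $\mathcal{D}_0$ only at time $+\infty$; $\partial\Omega$ may accordingly self-intersect and require decomposition into simple subloops, each of which is then treated by Green separately. Assumption~\ref{ass4} is precisely what prevents $\tilde\gamma$ from re-entering $\mathcal{D}_+$ after touching $\mathcal{D}_0$, keeping every simple subregion inside $\mathcal{D}_+$ where~\eqref{condGreen} applies. Assumption~\ref{ass2}.ii (so that $\tilde x$ is non-increasing in $\mathcal{D}_+$, just like the $x$-coordinate along $\gamma^N_{\bar y}$) is what ultimately lets one fix the orientation of $\partial\Omega$ in the favorable direction. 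Equality in the NSN characterization (Remark~\ref{rem_feedback}, which notes that the NSN trajectory leaves the singular arc tangentially) will handle the borderline case where $\tilde\gamma$ coincides with $\gamma^N_{\bar y}$ up to a null set.
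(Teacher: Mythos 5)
Your proposal is correct and follows essentially the same route as the paper: write $u\,dt$ as the clock form $(f_2\,dx-f_1\,dy)/\Delta$, close the competing trajectory against the NSN curve by an arc along ${\cal D}_0$, and apply Green's theorem with \eqref{condGreen} to show that any strictly lower peak forces a budget exceeding $K$. The only substantive variation is the closing arc: you evaluate $\omega|_{{\cal D}_0}=dy/g_2\ge 0$ directly (using only $g_2<0$), whereas the paper realizes that arc as an admissible trajectory on ${\cal D}_0$ generated by the feedback $\psi^\dag=-\nabla f_2\cdot f/\nabla f_2\cdot g$ — which is where the condition $\nabla f_2\cdot(f+g)\ge 0$ of Assumption~\ref{ass4} is actually used, not (as your aside suggests) to prevent re-entry into ${\cal D}_+$.
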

	 
\begin{proof}

	 Note first that one has ${\cal L}(y^0_{max})=0$ as the NSN control is identically null for $\bar y=y^0_{max}$. As the map ${\cal L}$ is decreasing by Proposition \ref{propcalC}, we deduce that there exists an unique $\bar y^\star \in [y_0,y_{max}]$ such that ${\cal L}(\bar y^\star)=K$ when ${\cal L}(y_0)\geq K$.
	 
	 \medskip

	 For a fixed initial condition $(x_0,y_0)$ in ${\cal D}_+$, we denote by $(x^\star(\cdot),y^\star(\cdot))$ the solution generated by the NSN strategy with $\bar y = \bar y^\star$, and $u^\star(\cdot)$ its open loop control.
	 Consider the curve ${\cal C}^\star$ in the plane
	 \[
	 {\cal C}^\star:= \left\{ (x^\star(t),y^\star(t)) \; ; \; t \in [0, t_h^\star] \right\}
	 \]
	 where $t_h^\star>0$ is such that $x^\star(t_h^\star)=x_h(\bar y^\star)$. ${\cal C}^\star$ is the part of the orbit for which $y^\star(\cdot)$ is non decreasing, and its extremity  belongs to ${\cal D}_0$.
	 
	   Let $\bar t^\star \in [0,t_h^\star]$ be such that $x^\star(\bar t^\star)=\bar x(\bar y^\star) \leq x_0$.
	 For any $t \in [0,\bar t^\star]$, the control $u^\star(t)$ is null. Then, at any $(x,y) \in {\cal C}^\star$ with $x > \bar x(\bar y^\star)$, the curve ${\cal C}^\star$ admits an upward normal in the $(x,y)$ plane given by
	 \[
	 \vec n(x,y)=\left[\begin{array}{c}
	 f_2(x,y)\\
	 -f_1(x,y)\end{array}\right]
	 \]
	 Let $(x,y) \mapsto \vec v(x,y,u)$ be the vector field in the plane for the control $u$. For any $(x,y) \in {\cal C}^\star$ with $x > \bar x(\bar y^\star)$, one has 
	 	\[
	 \vec n(x,y).\vec v(x,y,u)=\Delta(x,y)u\leq 0 .
	 \]
	 Therefore, the forward orbit with any other control $u(\cdot)$ lies below the curve ${\cal C}^\star$ in the $(x,y)$ plane for $x \in [\bar x(\bar y^\star),x_0]$. 
	 
	 \medskip
	 
	 Assume that there exists another solution $(x(\cdot),y(\cdot))$ with $(x(0),y(0))=(x_0,y_0)$, generated by an optimal control $u(\cdot)$ such that $\sup_t y(t) < \bar y^\star$.
	 From Lemma \ref{lemD+}, we know that $(x(\cdot),y(\cdot))$ reaches the level set ${\cal D}_0$ at a time $t^0$ (possibly infinite). The trajectory being bounded, by Assumption \ref{ass1}.i, the point $(x(t^0),y(t^0)) \in {\cal D}_0$ is finite, with $y(t^0)<\bar y^\star$. Let
	 \[
	 {\cal C}:=\{ (x(t),y(t)), \; ; \; t \in [0, t^0]\}
	 \]
	 that has to be below ${\cal C}^\star$, according to the above.

	From the point $(x^\star(t_h^\star),y^\star(t_h^\star)) \in {\cal D}_0$, there exists an admissible trajectory that stays in the level set ${\cal D}_0$ if for any $(x,y)\in{\cal D}_0$ there is a control $u$ in $[0,1]$ such that
	 \[
	 \nabla f_2(x,y).(f(x,y)+g(x,y)u)=0
	 \]
	 On the set ${\cal D}_0$, one has $\nabla f_2.f=\partial_x f_2 f_1$ which is negative by Assumptions \ref{ass2}.v-vi. Then the function
	 \[
	 	 \psi^\dag(x,y):=-\frac{\nabla f_2(x,y).f(x,y)}{\nabla f_2(x,y).g(x,y)} > 0 , \quad (x,y) \in {\cal D}_0
	 \]
	 is well defined and belongs to $[0,1]$ by Assumption \ref{ass4}.
	 Let $(x^\dag(\cdot),y^\dag(\cdot))$ be the solution of \eqref{sys} with $(x^\dag(t_h^\star),y^\dag(t_h^\star))=(x^\star(t_h^\star),y^\star(t_h^\star))\in {\cal D}_0$ and the feedback control $\psi^\dag$. We denote $u^\dag(\cdot)$ the corresponding open loop control. The trajectory remains in ${\cal D}_0$, and as $g_2$ is negative on ${\cal D}_0$ (Assumption \ref{ass4}) and $u^\dag$ is positive, one has $\dot y^\dag<0$. Therefore, there exists $t^\dag<+\infty$ such that 
	 $y^\dag(t^\dag)=y(t^0)$, with $x^\dag (t^\dag)=x_h(y(t^0))=x(t^0)$. Let
	 \[
	 {\cal C}^\dag := \{ (x^\dag(t),y^\dag(t)) \; ; \; t \in [t_h^\star, t^\dag]\}
	 \]

	 We consider now the concatenation of the three curves ${\cal C}^\star$, ${\cal C}^\dag$ and ${\cal C}$ (see Figure \ref{fig:green}), which defines a simple closed curve $\Gamma= \{ (\tilde x(\tau), \tilde y(\tau)) \; ; \; \tau \in [0,t^\dag+t^0) \}$ with
	 \[
	 (\tilde x(\tau), \tilde y(\tau)) = \begin{cases}
	 	(x^\star(\tau),y^\star(\tau)), & \tau \in [0,t_h^\star)\\
	 	(x^\dag(\tau),y^\dag(\tau)), & \tau \in [t_h^\star,t^\dag)\\
	 	(x(t^0-t^\dag-\tau),y(t^0-t^\dag-\tau)), & \tau \in [t^\dag,t^\dag+t^0)	 	
	 \end{cases}
	 \]
	 that is anti-clockwise oriented in the $(x,y)$ plane by $\tau \in [0,t^\dag+t^0)$ (see Figure \ref{fig:green}). Let ${\cal E}$ be the region bounded by $\Gamma$, which belongs to ${\cal D}_+$. By Assumption \ref{ass2}, $\Delta$ is non null on ${\cal D}_+$ and one can then write from equations \eqref{sys} the 1-form in ${\cal E}$
	 \[
	 u(t)dt=\frac{f_2(x,y)}{\Delta(x,y)} dx - \frac{f_1(x,y)}{\Delta(x,y)} dy
	 \]
	 Applying Green's Theorem, one obtains
	 \[
	 \oint_\Gamma u(t)dt = \iint_{\cal E} \frac{\partial}{\partial x}\left( -\frac{f_1(x,y)}{\Delta(x,y)}\right) -\frac{\partial}{\partial y}\left(\frac{f_2(x,y)}{\Delta(x,y)}\right) dxdy
	 \]
	 which is negative by condition \eqref{condGreen}. Consequently, one has
	 \[
	 \oint_\Gamma u(t)dt = \int_0^{t_h^\star} u^\star(t)dt + \int_{t_h^\star}^{t^\dag} u^\dag(t)dt - \int_0^{t_0}u(t)dt < 0
	 \]
	 that is
	 \[
	 \int_0^{t_0}u(t)dt > K+ \int_{t_h^\star}^{t^\dag} u^\dag(t)dt > K
	 \]
	 which contradicts the optimality of the control $u(\cdot)$ under the constraint \eqref{constraint}. 
	 \end{proof}
 
 \begin{figure}[h!t]
 	\centering
 	\includegraphics[width=0.9\textwidth]{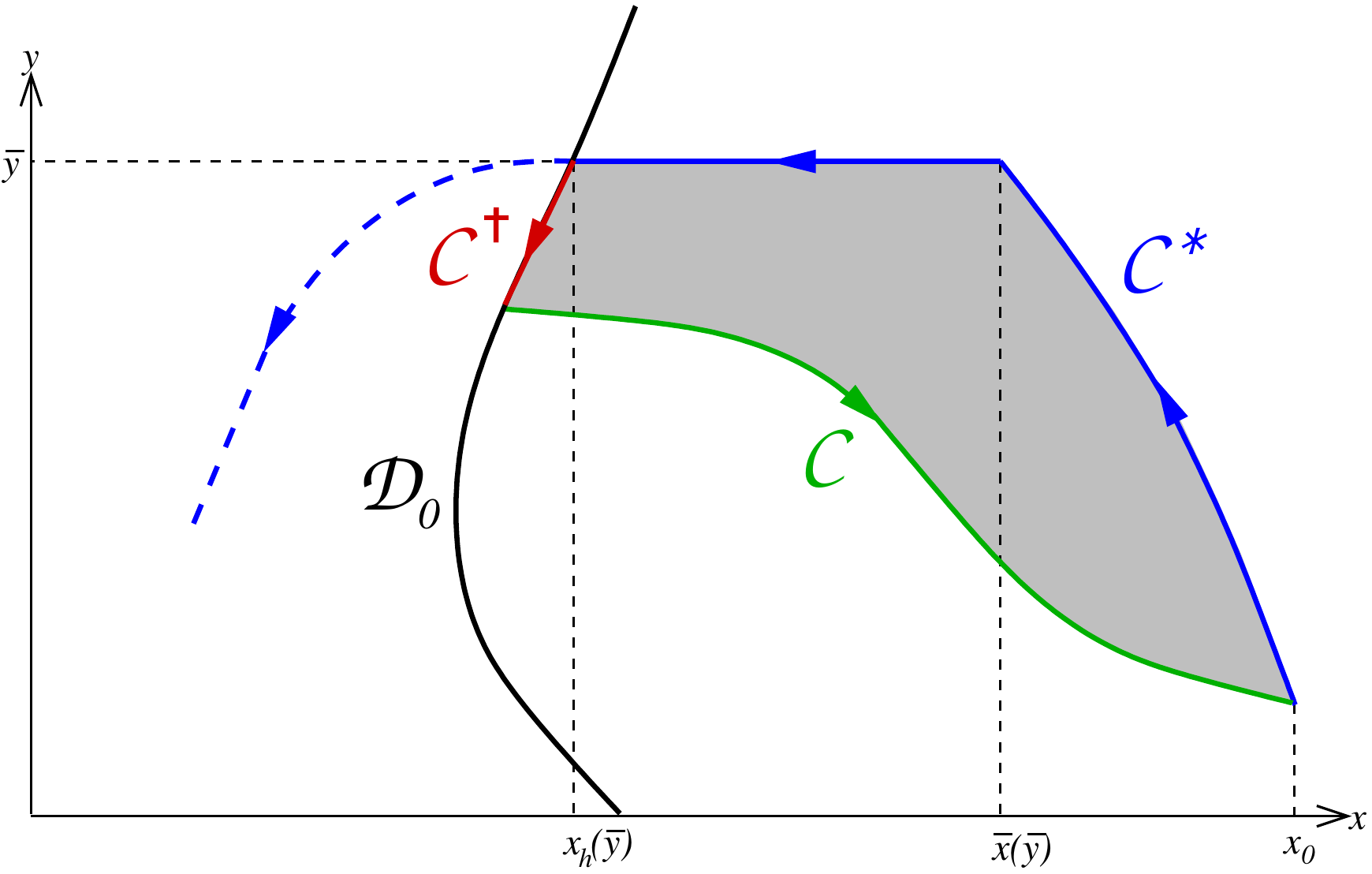}
 	\caption{Application of the Green's Theorem on the closed domain (in gray) delimited by the curves ${\cal C}^\star$ (in blue), ${\cal C}^\dag$ (in red) and ${\cal C}$ (in green). The curve (in black) represents the level-set ${\cal D}_0$ which delimits the domain ${\cal D}_-$ on the right.}
 	\label{fig:green}
 \end{figure}
 
 \begin{rem}
 	When $K > \mathcal{L}(y_0)$, the budget $K$ is large enough to ensure $y(t)\leq y_0$ for any $t \geq 0$. One can apply for instance the feedback strategy $\psi_{y_0}$, which is optimal with a $L^1$ norm of the control less than $K$, equal to ${\cal L}(y_0)$.
 \end{rem}

	 \bigskip
	 
	 Let us illustrate our results on an example for which the optimal control can be determined analytically.
	 
\begin{example}
	\label{ex-aca}
	\[
	\left\{\begin{array}{lll}
	\dot x & = & -(x+1)^2y + (x+1)^2y u\\
	\dot y & = & xy - (x+1)yu
	\end{array}
	\right. \qquad u \in [0,1]
	\]
	Whatever is the control $u$, one has $\dot x=0$ at $x=-1$ and $\dot y=0$ at $y=0$. Therefore the domain 	
	 \[
	 	{\cal D}= \{ (x,y) \in \Rset^2 ; \; x>-1, \; y>0 \}
	 \]
  is invariant. Let $V(x,y)=x+2y$. With $u=0$, one has $\frac{d}{dt}V=-x^2y-y^2<0$. The function $V$ is thus decreasing along the solutions in ${\cal D}$, from which one deduces the inequalities
  $-1 \leq x(t) \leq x(0)+2y(0)$ and $ 0 \leq y(t) \leq (x(0)+y(0)-1)/2$. The solutions in ${\cal D}$ with $u=0$ are thus bounded.

  The sub and super sets of the function $f_2$ are ${\cal D}_-=\{ (x,y) \in {\cal D} ; \; x <0\}$, ${\cal D}_+=\{ (x,y) \in {\cal D} ; \; x >0\}=\{ (x,y) \in \Rset^2 ; \; x>0, \; y>0\}$ and the function $x_h$ is simply the null function. Assumption \ref{ass1} is satisfied.

  \medskip

  In ${\cal D}_+$, the function $f_1(x,y)=-(x+1)^2y$ is negative and decreasing with respect to $x$ and $y$, while the function $g_1(x,y)=-f_1(x,y)$ is increasing with respect to $x$ and $y$ with $f_1+g_1=0 \leq 0$. The function $f_2(x,y)=xy$ is increasing with respect to $x$, while the function $g_2(x,y)=-(x+1)y$ is decreasing with respect to $x$ and $y$ with $f_2+g_2=-y<0$. In ${\cal D}_0$, one has $f_1=-y<0$ and $\partial_x f_2=y>0$, $\partial_y f_2=0 \leq 0$. Assumption \ref{ass2} is thus fulfilled.

  \medskip

  One has
  \[
\frac{f_2}{\Delta}=-\frac{x}{(x+1)^2y}
  \]
  which is increasing with respect to $y$ on ${\cal D}_+$. Assumption \ref{ass3} is satisfied.

  \medskip

  In ${\cal D}_0$, $g_2=-y$ is negative and $\nabla f_2.(f+g)=0 \geq 0$. Assumption \ref{ass4} is satisfied.

  \medskip
  Finally, one has
  \[
\partial_y\left(\frac{f_2}{\Delta}\right) + \partial_x \left(\frac{f_1}{\Delta}\right)
		= \frac{x}{(x+1)^2y}
  \]
  that is positive on ${\cal D}_+$. 
  
  Now, from Proposition \ref{propcalC}, one can determine the function ${\cal L}$ as follows.
  Firstly, the solution of the system with $u=0$ for an initial condition $(x_0,y_0)$ in ${\cal D}_+$ can be parameterized by $x$ as the map $t \mapsto x(t)$ is decreasing, that is
  \[
y(t)= y_0 - \int_{x_0}^{x(t)} \frac{x}{(x+1)^2} \, dx
  \]
which gives 
\begin{equation}
\label{ybar-ex1}
\int_{x_0}^{\bar x(\bar y)} \frac{x}{(x+1)^2} \, dx= y_0 - \bar y
\end{equation}
and
\begin{equation}
\label{L-ex1}\
\begin{array}{lllll}
y_{max}^0  & =  & \ds y_0-\int_{x_0}^{0} \frac{x}{(x+1)^2}dx  & = & \ds y_0-\left[\frac{1}{x+1}+\log(x+1)\right]_0^{x_0}\\[4mm]
  & & & = & \ds \frac{1}{x_0+1}+\log(x_0+1)-1+y_0
\end{array}
\end{equation}
Secondly, one has
 \[
 \begin{array}{lllll}
{\cal L}(\bar y) & = & \ds \int_0^{\bar x(\bar y)} \frac{-f_2(x,\bar y)}{\Delta(x,\bar y)} \, dx 
& = & 
\ds \frac{1}{\bar y}\int_0^{\bar x(\bar y)}  \frac{x}{(x+1)^2} \, dx\\[4mm]
& & & = & \ds \frac{1}{\bar y}\left(\int_0^{x_0}\frac{x}{(x+1)^2} \, dx + \int_{x_0}^{\bar x(\bar y)}\frac{x}{(x+1)^2} \, dx\right)
\end{array}
\]
which gives with \eqref{ybar-ex1} and \eqref{L-ex1} the expression
\[
{\cal L}(\bar y)=\frac{y_{max}^0-y_0+y_0-\bar y}{\bar y}=\frac{y_{max}^0}{\bar y}-1
 \]
that is defined for $\bar y \in [y_0,y_{max}^0]$. Finally, from Proposition \ref{mainprop}, we obtain that for a budget $K \leq \frac{y_{max}^0}{y_0}-1$, the NSN strategy \eqref{feedback} with
\[
\bar y=\bar y^\star := \frac{y_{max}^0}{K+1}
\]
is optimal. Therefore, the feedback
\[
\psi^\star(x,y):= \begin{cases}
			\frac{x}{x+1}, & \mbox{if } y=\bar y^\star \mbox{ and  } x>0\\
			0 , & \mbox{otherwise.}
		\end{cases}
\]
is optimal. An example of optimal solution is drawn on Figure \ref{fig:ex_aca}, where one can see that the optimal trajectory leaves tangentially the singular arc and the optimal control is continuous at that point, as underlined in Remark \ref{rem_feedback}.

\begin{figure}[h!t]
 	\centering
 	\includegraphics[width=0.45\textwidth]{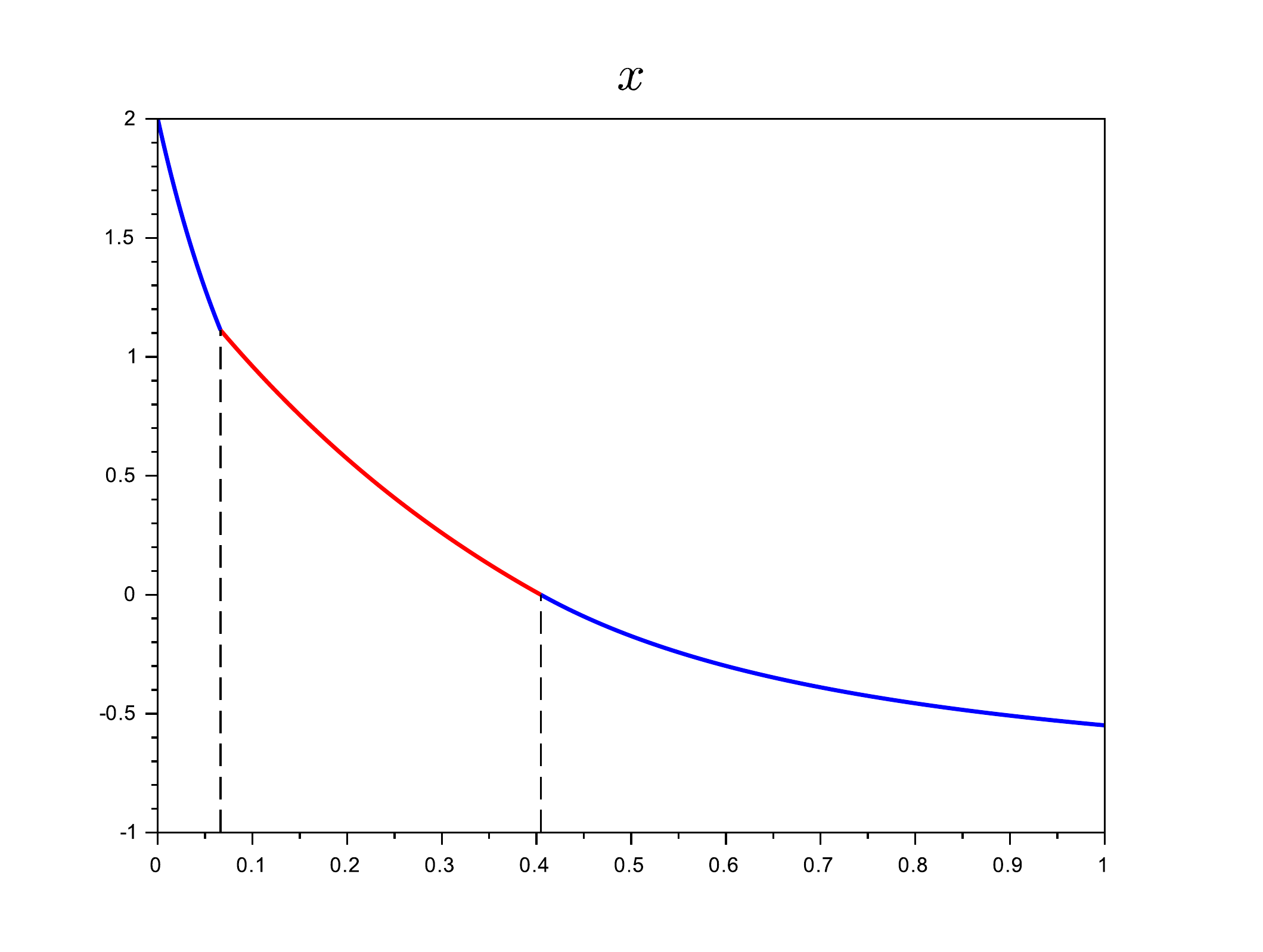}
  \includegraphics[width=0.45\textwidth]{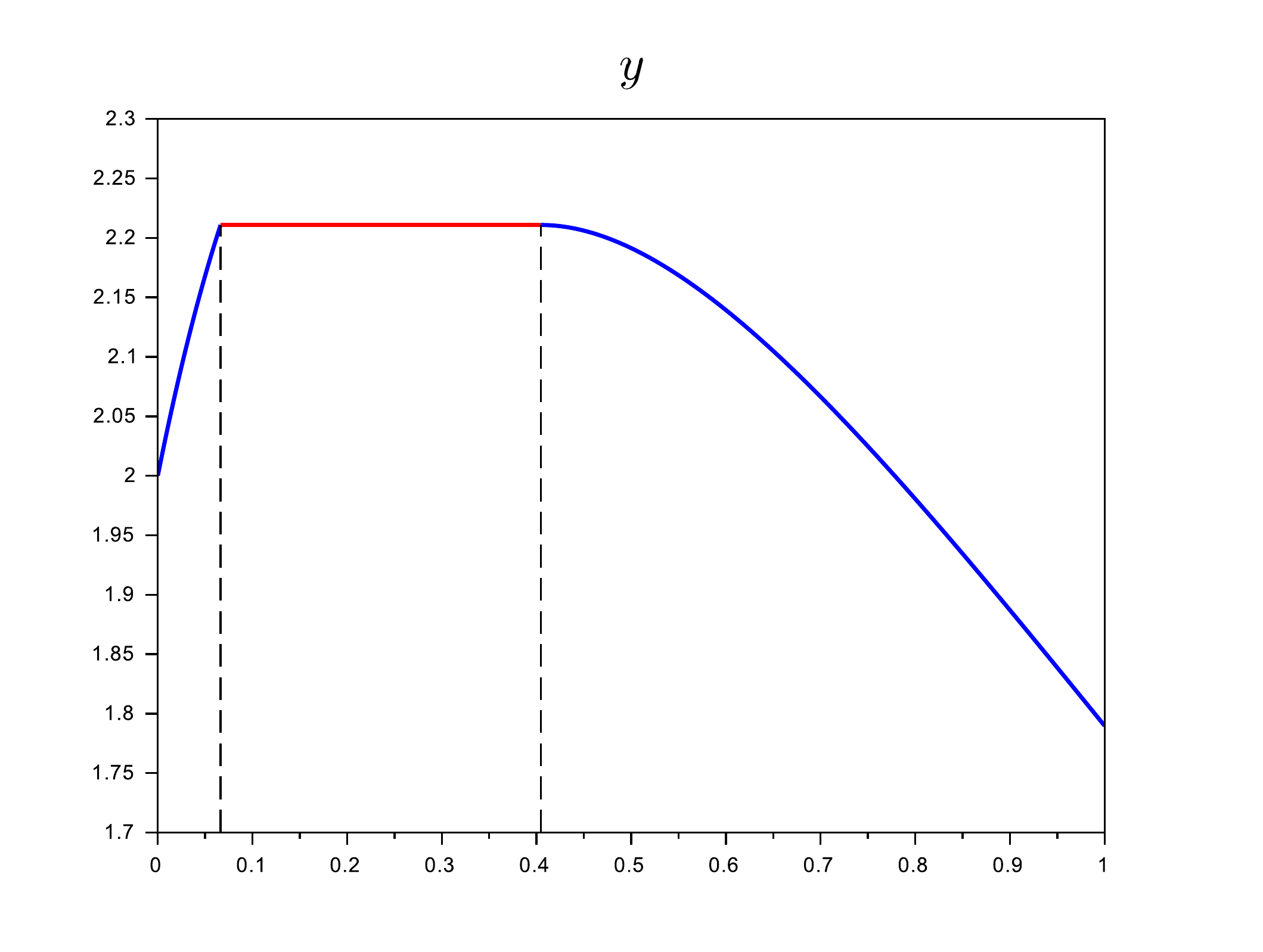}
  \includegraphics[width=0.45\textwidth]{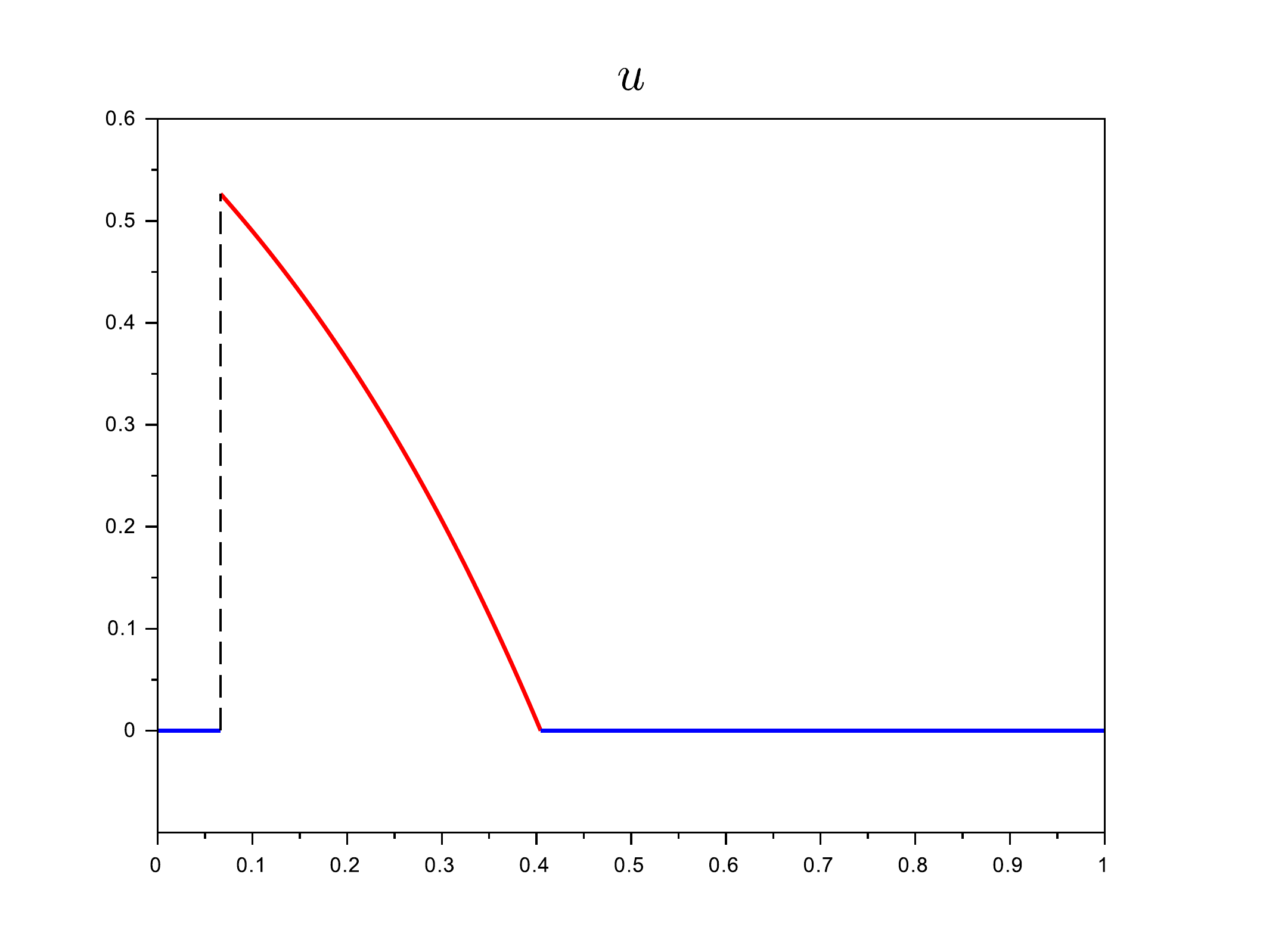}
 	\caption{Optimal solution of Example \ref{ex-aca} for initial condition $(x_0,y_0)=(2,2)$ and budget $K=0.1$ (singular arc is depicted in red).}
 	\label{fig:ex_aca}
 \end{figure}

  \end{example}

	 
	 \section{The case of Kolmogorov dynamics}
	 \label{secparticular}
	 	In this Section we particularize the results of Proposition \ref{mainprop} to a class of Kolmogorov dynamics in $\Rset_+^2$, for which it is easier to verify the required assumptions.
	 \begin{equation}
	 \label{sys2}
	 \left\{\begin{array}{lll}
	 \dot x & = & -\big(\phi_1(x,y)-\phi_2(x,y)u\big)x\\
	 \dot y & = & \big(\phi_3(x,y)-\phi_4(x,y)u\big)y
	 \end{array}
	 \right. \qquad u \in [0,1]
	 \end{equation}
	 where $\phi_i$ are smooth maps. The positive orthant ${\cal D}=\{ (x,y)\in\Rset^2; \; x >0, \; y >0\}$ is clearly invariant by \eqref{sys2}.  
	 
	 \begin{hypos}
	 	\label{hypKolmogorov}
	 	On ${\cal D}$, one has
	 \begin{enumerate}[label=\roman*.]
	 	\item $\phi_1$ and $\phi_2$ are positive, with $\phi_2-\phi_1 \leq M < +\infty$.
	 	\item $\phi_3$ is increasing with respect to $x$ and non increasing with respect to $y$, with $\phi_3(0,y) < 0 < \lim_{x \to+\infty}\phi_3(x,y)$ for any $y>0$.
	 \item $\phi_1\geq \phi_2$ with $\partial_y\phi_1 \geq \partial_y\phi_2>0$ when $\phi_3\geq 0$, and $\phi_1=\phi_2$ when $\phi_3=0$. The maps $x \mapsto \phi_1(x,y)x$, $x \mapsto \phi_2(x,y)x$ are increasing for any $y$.
	 	\item When $\phi_3>0$, $\phi_4$ is increasing with respect to $x$ with
	 	$\phi_4 > \phi_3$ and  $[\phi_3,\phi_4]_y:=\phi_3\partial_y \phi_4-\phi_4\partial_y\phi_3\geq 0$. The map $y \mapsto \phi_4(x,y)y$ is increasing for any $x$.
	 \end{enumerate}
 \end{hypos}

\begin{lem}
	 Under Hypotheses \ref{hypKolmogorov}, Assumptions \ref{ass1}, \ref{ass2}, \ref{ass3}, \ref{ass4} are fulfilled.
\end{lem}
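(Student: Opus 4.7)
My plan is to read off the Kolmogorov dynamics in the paper's notation as
\[
f_1 = -\phi_1 x, \qquad g_1 = \phi_2 x, \qquad f_2 = \phi_3 y, \qquad g_2 = -\phi_4 y,
\]
so that $\mathcal{D}_+ = \{\phi_3 > 0\}$, $\mathcal{D}_0 = \{\phi_3 = 0\}$, and
\[
\Delta = f_2 g_1 - f_1 g_2 = xy\bigl(\phi_2\phi_3 - \phi_1\phi_4\bigr),
\]
which is negative on $\mathcal{D}_+$ since $\phi_1 \geq \phi_2 > 0$ and $\phi_4 > \phi_3 > 0$ there. I then verify each of the four Assumptions as a direct consequence of Hypotheses \ref{hypKolmogorov}.

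For Assumption \ref{ass1}, items ii and iii are immediate from Hypothesis \ref{hypKolmogorov}.ii: strict monotonicity of $\phi_3$ in $x$ combined with $\phi_3(0,y)<0<\lim_{x\to+\infty}\phi_3(x,y)$ yields non-empty sub- and super-level sets of $f_2$ and a unique zero $x_h(y)$. For the boundedness in item i, on a $u=0$ trajectory $x$ is decreasing, and as long as the trajectory stays in $\mathcal{D}_+$ one has $x(t)\geq x_h(y(t)) \geq x_h(y_0)$ and $y(t)\geq y_0$; using that $\phi_1 x$ is increasing in $x$ (Hypothesis \ref{hypKolmogorov}.iii) and in $y$ (via $\partial_y\phi_1>0$ in $\mathcal{D}_+$), one obtains the uniform lower bound $\phi_1(x(t),y(t))\,x(t) \geq \phi_1(x_h(y_0),y_0)\,x_h(y_0) > 0$, so $\dot x$ is bounded above by a strictly negative constant and $\mathcal{D}_+$ is exited in finite time; in the complement $\dot y \leq 0$ and the solution stays bounded. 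For an admissible controlled trajectory, Hypothesis \ref{hypKolmogorov}.i gives $\frac{d}{dt}\log x = -\phi_1 + \phi_2 u \leq Mu$, whence $x(t) \leq x_0\,e^{MK}$, and $y$ is bounded as soon as its peak $\sup_t y(t)$ is.

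Assumption \ref{ass2} is then a checklist: items i--iv are one-line applications of the monotonicities of $\phi_1 x$, $\phi_2 x$, $\phi_3$, $\phi_4$, $\phi_4 y$ together with $\phi_1\geq\phi_2$ and $\phi_4>\phi_3$ in $\mathcal{D}_+$; items v--vi use $\phi_1>0$ and the fact that on $\mathcal{D}_0$ one has $\partial_y f_2 = (\partial_y\phi_3)\,y$ since $\phi_3=0$. For Assumption \ref{ass3}, I will compute
\[
\partial_y\!\left(\frac{f_2}{\Delta}\right) = \frac{1}{x}\,\partial_y\!\left(\frac{\phi_3}{\phi_2\phi_3-\phi_1\phi_4}\right),
\]
whose numerator collects algebraically as
\[
\phi_1\,[\phi_3,\phi_4]_y + \phi_3\bigl(\phi_4\,\partial_y\phi_1 - \phi_3\,\partial_y\phi_2\bigr),
\]
both summands being non-negative on $\mathcal{D}_+$ by Hypothesis \ref{hypKolmogorov}.iv and the inequalities $\phi_4>\phi_3$, $\partial_y\phi_1\geq\partial_y\phi_2>0$. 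Finally, Assumption \ref{ass4} on $\mathcal{D}_0$ is immediate: $g_2 = -\phi_4 y < 0$ by continuity from $\phi_4>\phi_3$ in $\mathcal{D}_+$, and using $\phi_1=\phi_2$ on $\mathcal{D}_0$ (Hypothesis \ref{hypKolmogorov}.iii) the gradient pairing collapses to
\[
\nabla f_2.(f+g) = (\partial_y\phi_3)\,y\cdot(-\phi_4\,y) = -\phi_4\,y^2\,\partial_y\phi_3 \geq 0,
\]
since $\partial_y\phi_3\leq 0$.

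The main obstacle I anticipate is Assumption \ref{ass1}.i: a controlled trajectory could a priori send $x\to+\infty$, and getting the needed compactness forces one to combine the slack condition $\phi_2-\phi_1\leq M$ with the $L^1$ budget $K$; the remaining items in all four Assumptions are essentially bookkeeping driven by Hypotheses \ref{hypKolmogorov}.
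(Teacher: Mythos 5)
Your proposal is correct and follows essentially the same route as the paper: the same identifications $f_1=-\phi_1 x$, $g_1=\phi_2 x$, $f_2=\phi_3 y$, $g_2=-\phi_4 y$, the same $x(t)\leq x_0e^{MK}$ bound for admissible trajectories, the same finite-time exit from $\mathcal{D}_+$ for $u=0$, the identical algebraic numerator $\phi_1[\phi_3,\phi_4]_y+\phi_3(\phi_4\partial_y\phi_1-\phi_3\partial_y\phi_2)$ for Assumption 3, and the same collapse of $\nabla f_2.(f+g)$ on $\mathcal{D}_0$. The only step the paper makes explicit that you leave implicit is the positive invariance of $\mathcal{D}\setminus\mathcal{D}_+$ under $u=0$ (obtained from $\frac{d}{dt}\phi_3=-\partial_x\phi_3\,\phi_1 x<0$ on $\mathcal{D}_0$), which is what licenses your concluding claim that $y$ is non-increasing, hence bounded, once $\mathcal{D}_+$ has been left.
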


\begin{proof}
 
 From hypothesis \ref{hypKolmogorov}.ii, there exists a unique  map $y \mapsto x_h(y)>0$ such that $\phi_3(x_h(y),y)=0$ for any $y>0$, which is moreover non decreasing with respect to $y$. The sub and super level sets ${\cal D}_-$, ${\cal D}_+$, are thus non empty and defined as
 \[
 {\cal D}_-=\{ (x,y)\in{\cal D}; \; x < x_h(y)\}, \quad
 {\cal D}_+=\{ (x,y)\in{\cal D}; \; x > x_h(y)\}
 \]
 and the level set ${\cal D}_0$ is $\{ (x,y)\in{\cal D}; \; x = x_h(y)\}$.
 
 From Hypothesis \ref{hypKolmogorov}.i, one has $\dot x\leq \big(\phi_1(x,y)(u-1)+Mu\big)x$ from which one  can write
 for any admissible solution
 \[
 x(t) \leq x_0e^{MK} \exp\left(\int_0^t \phi_1(x(\tau),y(\tau))(u(\tau)-1) \; d\tau\right) \leq x_0e^{MK} < +\infty
 \]
For the uncontrolled dynamics, one has $\dot x = -\phi_1(x,y) x < 0$ i.e.~$x(\cdot)$ is decreasing. Let us show that ${\cal D}\setminus {\cal D_+}={\cal D}_0\cup{\cal D}_-$ is reached in finite time. If not, one has $x(t)\geq x_h(y(t))$ for any $t\geq 0$ and $y(\cdot)$ is increasing. Then,  one should have  $\dot x(t) \leq -\kappa x(t))$ for any $t\geq 0$, where $\kappa=\min_{\xi \in [x_h(y_0),x(0)]}\phi_1(\xi,y(0))>0$.
 Therefore, $x(\cdot)$ converges to $0$, while $x_h(y(t))\geq x_h(y(0))>0$ for any $t \geq 0$, and thus a contradiction.  At ${\cal D}_0$, one has
 \[
 \frac{d}{dt}\phi_3(x,y)= -\partial_x\phi_3(x,y)\phi_1(x,y)x < 0
 \]
 with Hypotheses \ref{hypKolmogorov}i.~and ii. The domain ${\cal D}\setminus {\cal D_+}$ is thus (positively) invariant.  Moreover one has $\dot y \leq $ in ${\cal D}\setminus {\cal D_+}$. We conclude that the solutions for the uncontrolled dynamics are either non decreasing, or increasing up to a finite time and then non decreasing (and thus bounded). Moreover, any other controlled solution with a lower peak of $y$ is also bounded, as $x(\cdot)$ is always bounded. Assumption \ref{ass1} is verified.
 
 \medskip

 Clearly, the map $f_1=-\phi_1x$ is negative in ${\cal D}_+ \cup {\cal D}_0$ and decreasing with respect to $x$ and $y$ from Hypotheses \ref{hypKolmogorov}.i. and iii. The map $g_1=\phi_2 x$ is increasing with respect to $x$ and $y$ also from Hypothesis \ref{hypKolmogorov}.iii, and $f_1+g_1=(\phi_2-\phi_1)x\leq 0$  in  ${\cal D}_+$. The map $f_2=\phi_3 y$ is increasing with respect to $x$ from Hypothesis \ref{hypKolmogorov}.ii, and $\partial_y f_2=\phi_3 + \partial_y \phi_3 y$ is non positive on ${\cal D}_0$. The map $g_2=-\phi_4 y$ is decreasing with respect to $x$ and $y$ from Hypothesis \ref{hypKolmogorov}.iv, and $f_2+g_2=(\phi_3-\phi_4)y <0$ in  ${\cal D}_+$. Assumption \ref{ass2} is verified.

 \medskip

 One has $\Delta=(\phi_3\phi_2-\phi_1\phi_4)xy$ and with inequality $\phi_1\geq \phi_2$, one  obtains  $\Delta \leq (\phi_3-\phi_4)\phi_1xy$ on ${\cal D}_+$, which is negative by Hypothesis \ref{hypKolmogorov}.iv. One gets
 \[
 \dfrac{f_2}{\Delta}=\dfrac{\phi_3}{(\phi_3\phi_2-\phi_1\phi_4)x} 
 \Rightarrow 
 \dfrac{\partial}{\partial y} \left( \dfrac{f_2}{\Delta} \right) =  \dfrac{\phi_3(\phi_4\partial_y\phi_1-\phi_3\partial_y\phi_2)+\phi_1[\phi_3,\phi_4]_y}{(\phi_3\phi_2-\phi_1\phi_4)^2x}
 \]
 With Hypotheses \ref{hypKolmogorov}.iii. and iv., one has $[\phi_3,\phi_4]_y\geq 0$ and $\phi_4\partial_y\phi_1-\phi_3\partial_y\phi_2>0$ on ${\cal D}_+$, which implies that $\partial_y f_2/\Delta$ is positive on ${\cal D}_+$.
 Assumption \ref{ass3} is thus verified.
 
 \medskip

  From Hypothesis \ref{hypKolmogorov}.iv, one has $\phi_4>0$ on ${\cal D}_0$. Then, 
  the map $g_2=-\phi_4y$ is negative on ${\cal D}_0$. Moreover, one has
 \[
 \nabla f_2=\left[\begin{array}{c}
 	\partial_x\phi_3\\
 	\partial_y\phi_3
 	\end{array}\right]y, \quad (x,y) \in {\cal D}_0
 \]
 which gives
 \[
 \nabla f_2.(f+g)  =  \partial_x\phi_3 (\phi_2-\phi_1)xy +  \partial_y\phi_3(\phi_3-\phi_4)y^2
 \]
 that is non negative on ${\cal D}_0$ with Hypotheses \ref{hypKolmogorov}.ii, iii and iv. Assumption 4 is fulfilled.
 
 \end{proof}
 
 Let us  posit
 \[
 \delta(x,y)=\phi_3(x,y)\phi_2(x,y)-\phi_1(x,y)\phi_4(x,y), \quad (x,y) \in {\cal D}
 \]
The application of Propositions \ref{propcalC} and \ref{mainprop} gives the following result.

 \begin{prop}
 	\label{prop2}
 	Under Hypotheses \ref{hypKolmogorov}, one has
 	\[
 	{\cal L}(\bar y)=\int_{x_h(\bar y)}^{\bar x(\bar y)}\dfrac{-\phi_3(x,\bar y)}{\delta(x,\bar y)x} \, dx
 	\]
 	For initial conditions $(x_0,y_0)$ in  ${\cal D}_+$ such that $\mathcal{L}(y_0)\geq K$ and
 		\begin{equation}
 	\label{condGreenbis}
 	\begin{array}{l}
 	(\phi_3(\phi_4\partial_y\phi_1-\phi_3\partial_y\phi_2)+\phi_1[\phi_3,\phi_4]_y)y \; + \; \\
 	 \qquad \quad (\phi_3[\phi_1,\phi_2]_x+\phi_1[\phi_2\partial_x\phi_3-\phi_1\partial_x\phi_4])x > 0, \; 
 	 (x,y) \in {\cal D}_+, \; y  \leq y^0_{max} 
 	 \end{array}
 	\end{equation}
 	(where $[\phi_1,\phi_2]_x:=\phi_1\partial_x\phi_2-\phi_2\partial_x\phi_1$), 
 	then there exists $y^*\in[y_0,y^0_{max}]$ such that $\mathcal{L}(y^*)=K$ and the feedback
 	\begin{equation}
 	\label{feedbackKolmogorov}
 	\psi_{y^\star}(x)= \begin{cases}
 		\dfrac{\phi_3(x,y^\star)}{\phi_4(x,y^\star)} , & \mbox{if } y=y^\star \mbox{ and  } x > x_h(y^\star)\\
 		0 , & \mbox{otherwise.}
 	\end{cases}
 	\end{equation}
 	is optimal.
 \end{prop}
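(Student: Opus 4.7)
The plan is to show that Proposition \ref{prop2} follows from Propositions \ref{propcalC} and \ref{mainprop} once the generic quantities $f_1,f_2,g_1,g_2,\Delta$ are rewritten in the Kolmogorov form. The preceding lemma already verifies that Assumptions \ref{ass1}--\ref{ass4} hold, so I may invoke the two earlier propositions directly; the remaining work is a chain of explicit algebraic substitutions together with a monotonicity argument for the existence of $y^\star$.

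First, I would compute $\Delta$ in the Kolmogorov setting: since $f_1=-\phi_1 x$, $f_2=\phi_3 y$, $g_1=\phi_2 x$, $g_2=-\phi_4 y$, one gets $\Delta=(\phi_3\phi_2-\phi_1\phi_4)xy=\delta(x,y)\,xy$, and in particular
\[
\dfrac{-f_2(x,\bar y)}{\Delta(x,\bar y)}=\dfrac{-\phi_3(x,\bar y)}{\delta(x,\bar y)\,x}.
\]
Plugging this into the formula of Proposition \ref{propcalC} gives the claimed expression of $\mathcal L(\bar y)$. Since Proposition \ref{propcalC} also ensures that $\mathcal L$ is decreasing on $[y_0,y^0_{max}]$ with $\mathcal L(y^0_{max})=0$, the intermediate value theorem yields a unique $y^\star\in[y_0,y^0_{max}]$ with $\mathcal L(y^\star)=K$ whenever $\mathcal L(y_0)\geq K$. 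Similarly, the singular feedback $k(x)=-f_2/g_2$ of \eqref{feedback} becomes $k(x)=\phi_3(x,y^\star)/\phi_4(x,y^\star)$, which is exactly \eqref{feedbackKolmogorov}.

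The one nontrivial computation is the translation of \eqref{condGreen} into \eqref{condGreenbis}. Writing $f_2/\Delta=\phi_3/(\delta x)$ and $f_1/\Delta=-\phi_1/(\delta y)$, I would compute the two partial derivatives separately. The derivative with respect to $y$ has already been done in the lemma, giving
\[
\dfrac{\partial}{\partial y}\!\left(\dfrac{f_2}{\Delta}\right)=\dfrac{\phi_3(\phi_4\partial_y\phi_1-\phi_3\partial_y\phi_2)+\phi_1[\phi_3,\phi_4]_y}{\delta^2\,x}.
\]
For the second one, expanding $\partial_x\delta=\partial_x\phi_3\,\phi_2+\phi_3\partial_x\phi_2-\partial_x\phi_1\,\phi_4-\phi_1\partial_x\phi_4$ and collecting the terms in $\phi_1\partial_x\delta-\partial_x\phi_1\,\delta$ produces, after cancellation of the two $\partial_x\phi_1$ contributions,
\[
\dfrac{\partial}{\partial x}\!\left(\dfrac{f_1}{\Delta}\right)=\dfrac{\phi_3[\phi_1,\phi_2]_x+\phi_1(\phi_2\partial_x\phi_3-\phi_1\partial_x\phi_4)}{\delta^2\,y}.
\]
Multiplying the sum by $\delta^2 xy>0$ reproduces the left-hand side of \eqref{condGreenbis}, so \eqref{condGreenbis} is equivalent to \eqref{condGreen} on $\mathcal D_+$.

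With these identifications, Proposition \ref{mainprop} applies verbatim and delivers the optimality of the feedback \eqref{feedbackKolmogorov}. The main obstacle is purely bookkeeping: keeping track of the signs in the expansion of $\phi_1\partial_x\delta-\partial_x\phi_1\,\delta$ to confirm that the $\partial_x\phi_1$ terms cancel and the remainder organises into the bracket $[\phi_1,\phi_2]_x$ and the group $\phi_2\partial_x\phi_3-\phi_1\partial_x\phi_4$ appearing in \eqref{condGreenbis}; once this algebraic identity is checked, no further optimal-control arguments are needed.
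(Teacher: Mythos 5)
Your proposal is correct and follows essentially the same route as the paper: the paper's proof likewise reduces everything to Propositions \ref{propcalC} and \ref{mainprop} via the substitutions $f_1=-\phi_1x$, $f_2=\phi_3y$, $g_1=\phi_2x$, $g_2=-\phi_4y$, $\Delta=\delta xy$, reuses the expression of $\partial_y(f_2/\Delta)$ already derived in the preceding lemma, and computes $\partial_x(f_1/\Delta)=\bigl(\phi_3[\phi_1,\phi_2]_x+\phi_1(\phi_2\partial_x\phi_3-\phi_1\partial_x\phi_4)\bigr)/(\delta^2y)$, exactly as you do. Your algebra (cancellation of the $\partial_x\phi_1\,\phi_1\phi_4$ terms and clearing the denominator $\delta^2xy>0$) checks out and matches \eqref{condGreenbis}.
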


\begin{proof}
	One has
	\[
		\dfrac{f_1}{\Delta}=\dfrac{-\phi_1}{(\phi_3\phi_2-\phi_1\phi_4)y} 
		\Rightarrow 
		\dfrac{\partial}{\partial x} \left( \dfrac{f_1(x,y)}{\Delta(x,y)} \right) = 
	 \dfrac{\phi_3[\phi_1,\phi_2]_x+\phi_1[\phi_2\partial_x\phi_3-\phi_1\partial_x\phi_4]}{(\phi_3\phi_2-\phi_1\phi_4)^2y}
	\]
	and then condition \eqref{condGreen} amounts exactly to have \eqref{condGreenbis}.
\end{proof}

\bigskip

Let us underline that the first term in \eqref{condGreenbis} is necessarily positive, under Hypotheses \ref{hypKolmogorov}.i, iii and iv. A simple way to guarantee condition \eqref{condGreenbis} to be fulfilled is to have the second term non-negative, which can be obtained for instance as follows. 

\begin{coro}
	\label{coro}
		Under Hypotheses \ref{hypKolmogorov} with $\phi_1=\phi_2$ and $\phi_4=\phi_3+\alpha$ ($\alpha>0$) in ${\cal D}$, the feedback \eqref{feedback} is optimal for any initial condition in ${\cal D}_+$ with  $\mathcal{L}(y_0)\geq K$.
\end{coro}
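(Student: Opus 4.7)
The plan is to reduce the result to a direct verification of the Green-type positivity condition \eqref{condGreenbis} from Proposition \ref{prop2}, under the two structural simplifications $\phi_1=\phi_2$ and $\phi_4=\phi_3+\alpha$ with $\alpha>0$ constant, so that Proposition \ref{prop2} applies. Since all other hypotheses of \ref{hypKolmogorov} are assumed, there is nothing to check for well-posedness or for the NSN characterization; only the sign of the Green integrand remains.

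First I would compute the ``$x$-term'' in \eqref{condGreenbis}. The bracket $[\phi_1,\phi_2]_x=\phi_1\partial_x\phi_2-\phi_2\partial_x\phi_1$ vanishes identically from $\phi_1=\phi_2$. For the remaining piece $\phi_2\partial_x\phi_3-\phi_1\partial_x\phi_4$, I use $\phi_4=\phi_3+\alpha$ with $\alpha$ constant to get $\partial_x\phi_4=\partial_x\phi_3$, and combined again with $\phi_1=\phi_2$ this quantity vanishes. Hence the whole second term in \eqref{condGreenbis} is zero on ${\cal D}$.

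Next I would compute the ``$y$-term''. Under the same reductions,
\[
\phi_4\partial_y\phi_1-\phi_3\partial_y\phi_2 = (\phi_4-\phi_3)\partial_y\phi_1 = \alpha\,\partial_y\phi_1,
\]
and
\[
[\phi_3,\phi_4]_y = \phi_3\partial_y\phi_4-\phi_4\partial_y\phi_3 = -\alpha\,\partial_y\phi_3,
\]
so the first term in \eqref{condGreenbis} reduces to $\alpha\bigl(\phi_3\partial_y\phi_1-\phi_1\partial_y\phi_3\bigr)\,y$. On ${\cal D}_+$, Hypothesis \ref{hypKolmogorov}.i gives $\phi_1>0$, the definition of ${\cal D}_+$ gives $\phi_3>0$, Hypothesis \ref{hypKolmogorov}.iii gives $\partial_y\phi_1=\partial_y\phi_2>0$, and Hypothesis \ref{hypKolmogorov}.ii gives $\partial_y\phi_3\leq 0$. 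Therefore $\phi_3\partial_y\phi_1>0$ and $-\phi_1\partial_y\phi_3\geq 0$, so the combination is strictly positive for $y>0$.

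Putting the two terms together, condition \eqref{condGreenbis} holds strictly on ${\cal D}_+$, and Proposition \ref{prop2} yields optimality of \eqref{feedbackKolmogorov} whenever ${\cal L}(y_0)\geq K$, which is exactly the claim. There is no genuine obstacle: the corollary is essentially an algebraic check, and the only mildly delicate point is keeping track of the sign of $\partial_y\phi_3$ (which is $\leq 0$, not $<0$) so that one must lean on the strict positivity of $\phi_3\partial_y\phi_1$ coming from Hypothesis \ref{hypKolmogorov}.iii to conclude strict inequality.
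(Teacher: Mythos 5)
Your proof is correct and follows essentially the same route the paper intends: the specializations $\phi_1=\phi_2$ and $\phi_4=\phi_3+\alpha$ make the second ($x$-dependent) term of \eqref{condGreenbis} vanish, while the first term is strictly positive on ${\cal D}_+$, so Proposition \ref{prop2} applies directly. Your simplification of the first term to $\alpha(\phi_3\partial_y\phi_1-\phi_1\partial_y\phi_3)y$ is a clean variant of the paper's observation that this term is already positive under Hypotheses \ref{hypKolmogorov}.i, iii and iv alone.
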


\medskip

We present below some concrete examples within the biological field, which satisfy the conditions of Corollary \ref{coro} and allow to conclude directly about the optimality of the NSN strategy.

\begin{example}
	\label{ex1}
	We consider the SIR model \cite{KermackMcKendrick} which is very popular in epidemiology. With non-pharmaceutic interventions which consist in reducing the contact between susceptible and infected populations (by means of reducing social distance for human disease for instance), the model writes:
	\[
	\left\{\begin{array}{lll}
	\dot x & = & -\beta(1-u) xy\\
	\dot y & = & \beta(1-u) xy -\alpha y
	\end{array}
	\right. \qquad u \in [0,1]
	\]
	where $x$ and $y$ stand for the density of susceptible and infected populations, respectively, and $u$ is the control variable (naturally subject to a budget constraint). Parameter $\beta$ is the infection rate (without intervention), and $\alpha$ is the recovery rate.
	Without control (i.e. $u=0$), it is well known that the condition for an epidemics outbreak is given by the reproduction number
	\[
	{\cal R}_0:=\frac{\alpha}{\beta}
	\]
	that has to be larger than one. Then, the size of the infected population $y(\cdot)$ increases up to a peak value that could be very high. The objective of the control is to reduce this peak value.
	 Here the domain ${\cal D}$ is
	 \[
	 {\cal D} = \{ (x,y) \in \Rset_+^2 ; \; x+y \leq 1 \}
	 \]
	 and one has the following expressions of the functions $\phi_i$ $(i=1\cdots 4)$.
	\[
	\phi_1(x,y)=\phi_2(x,y)=\beta y, \quad \phi_3(x,y)=\beta x-\alpha, \quad \phi_4(x,y)=\beta x
	\]
	The separatrix  ${\cal D}_0$ between ${\cal D}_-$ and ${\cal D}_+$ is a vertical segment
		\[
	{\cal D}_0 = \left\{ (x,y) \in {\cal D}\; ; \; x= \frac{1}{{\cal R}_0} \right\}
	\]
	One can straightforwardly check that Hypotheses \ref{hypKolmogorov} and conditions of Corollary \ref{coro} are fulfilled when ${\cal R}_0>1$. We can then conclude that the NSN strategy is optimal under a $L^1$ budget control on the control $u(\cdot)$, as in \cite{Automatica}. Let us underline that when the initial density $y_0$  of the infected population is very low (which is often the case in face to a new epidemics), the time to reach the minimum peak can be very large, justifying the consideration of an unbounded time horizon. In \cite{Automatica}, it is shown that the optimal control can be determined analytically for the limiting case of of an arbitrary small $y_0$ with an initial density of the susceptible population equal to $1-y_0$.
\end{example}

\begin{example}
	\label{ex2}
	We consider the classical resource-consumer (or "batch" bio-process) model, which is very popular in microbiology (see e.g.~\cite{HLRS17})
	\[
	\left\{\begin{array}{lll}
	\dot x & = & \ds -\frac{1}{Y} \mu(x)y(1-u)\\[3mm]
	\dot y & = & \ds  \mu(x)y(1-u)-m y
	\end{array}
	\right. \qquad u \in [0,1]
	\]
	where $x$ and $y$ are the concentrations of the resource and the consumer, respectively. The function $\mu$ is the specific growth rate, that is assumed to follow the well-known Monod's expression
	\[
	\mu(x):=\frac{x}{K+x}
	\]	
	The parameter $Y$ is the yield coefficient of the transformation of the resource into consumer growth, while
	the parameter $m>0$ is the mortality rate of the consumer (supposed to be relatively low compared to the growth term). Here the control $u$ is an isolation factor (by biological or physical means) which limits the access to the resource for the consumer. When the consumer is a living species that proliferates on the resource in an undesirable way (e.g. bacteria presenting some health risks), an objective is to reduce its peak value for a given budget on the control.
	For this model, the domain ${\cal D}$ is
	\[
	{\cal D}= \{ (x,y) \in \Rset_+^2 ; \; x>0, \; y>0 \}
	\]
	with the functions
	\[
	\phi_1(x,y)=\phi_2(x,y)=\frac{1}{Y} \frac{y}{1+x}, \quad \phi_3(x,y)= \frac{x}{1+x}-m, \quad \phi_4(x,y)= \frac{x}{1+x}
	\] 
	for which can easily check that Hypotheses \ref{hypKolmogorov} and conditions of Corollary \ref{coro} are fulfilled for a mortality rate $m<1$.  Here also, the level set ${\cal D}_0$ which splits the domain ${\cal D}$ into between ${\cal D}_-$ and ${\cal D}_+$ is a vertical line
	\[
	{\cal D}_0 = \left\{ (x,y) \in {\cal D} \; ; \; x= \frac{m}{1-m} \right\}
\]
	Then, we can conclude that the NSN strategy is also optimal for this problem.
\end{example}

\begin{example}
	\label{ex3}
	We consider here the same resource-consumer model as in Example \ref{ex2} but with a ratio-dependent growth rate (see e.g.~\cite{HLRS17})
	\[
	\left\{\begin{array}{lll}
		\dot x & = & \ds -\frac{1}{Y} \mu(x,y)y(1-u)\\[3mm]
		\dot y & = & \ds  \mu(x,y)y(1-u)-m y
	\end{array}
	\right. \qquad u \in [0,1]
	\]
	where $\mu$ is the Contois function
	\[
	\mu(x,y)=\frac{x}{x+y}
	\]
	This model aims to take into consideration a crowding effect when the population of consumers is high, or equivalently that the growth is driven by the ratio "resource by consumer" $x/y$ rather than simply the level of the resource $x$.
	Here also, one can easily check that the corresponding functions
	\[
	\phi_1(x,y)=\phi_2(x,y)= \frac{1}{Y} \frac{y}{x+y}, \quad \phi_3(x,y)=\frac{x}{x+y}-m, \quad \phi_4(x,y)= \frac{x}{x+y}
	\]
	satisfy Hypotheses \ref{hypKolmogorov} and conditions of Corollary \ref{coro} for $m<1$. Let us underline that the function $\phi_3$ depends on both variables, differently to Examples \ref{ex1} and \ref{ex2}, and consequently the function $x_h(\cdot)$ is not constant here. The NSN strategy is again optimal for $m<1$ and the level set ${\cal D}_0$, which gives to the end of the singular arc, is no longer a vertical line:
		\[
	{\cal D}_0 = \left\{ (x,y) \in {\cal D} \; ; \; x= \frac{m}{1-m}y \right\}
	\]
\end{example}

\section*{Acknowledgment}
This work has been partially supported by MIAI@Grenoble Alpes (ANR19-P3IA-0003).



\begin{thebibliography}{99}     
	
	\bibitem{BarronIshii}
	{\sc Barron, E.N. and Ishii, H.}
	{\em The Bellman equation for minimizing the maximum cost.}
	Nonlinear Analysis: Theory, Methods \& Applications
	13(9), 1067--1090, 1989.
	
	\bibitem{DiMarcoGonzalez1999}
	{\sc Di Marco, A. and Gonzalez, R.L.V.} {\em Minimax optimal control problems. Numerical analysis of the finite horizon case.} ESAIM: Mathematical Modelling and Numerical Analysis 33(1), 23--54, 1999.
	
	\bibitem{GianattiAragoneLolitoParente}
	{\sc Gianatti, J., Aragone, L., Lotito, P. and Parente, L.}, {\em Solving minimax control problems via nonsmooth optimization}, Operations Research Letters 44, 680--686, 2016.
	
	\bibitem{GonzalezAragone}
	{\sc Gonzalez, R.L.V. and Aragone, L.}, {\em A Bellman's equation for minimizing the maximum cost}, Indian Journal of Pure \& Applied Mathematics 31(12), 1621--1632, 2000.
	
	\bibitem{HLRS17}
	{\sc Harmand J., Lobry C., Rapaport A. and Sari T.}, {\em The Chemostat: Mathematical Theory of Micro-organisms Cultures}, Wiley, Hoboken, 2017.
	
	\bibitem{HermesLaSalle}
	{\sc Hermes, H. and  and La Salle, J.P.}, 
	{\em Functional Analysis and Time Optimal Control},  Mathematics in Science and Engineering, Vol.~56, Academic Press, New York 1969.
	
	
	\bibitem{KermackMcKendrick}
	{\sc Kermack, W. and McKendrick, A.} {\em A contribution to the mathematical
		theory of epidemics.}, Proceedings of the Royal Society A115, 700--721, 1927.
	
	\bibitem{Miele}
	{\sc Miele, A.} 
	{\em Extremization of Linear Integrals by Green's Theorem}, Mathematics in Science and Engineering, 5, 69--98, 1962.

	
	\bibitem{Automatica}
	{\sc Molina, E. and Rapaport, A.},
	{\em An optimal feedback control that minimizes the epidemic peak in the SIR model under a budget constraint}, Automatica, 146, 110596, 2022.
	
	\bibitem{JOTA}
	{\sc Molina, E., Rapaport, A. and Ramirez, H.},
	{\em Equivalent Formulations of Optimal Control Problems with Maximum Cost and Applications},
	Journal of Optimization Theory and Applications, 2022, 195, 953--975, 2022.
	
\end{thebibliography}
\end{document}